\newtheorem{theorem}{Theorem}[section]
\newtheorem{lemma}[theorem]{Lemma}
\newtheorem{corollary}[theorem]{Corollary}
\theoremstyle{definition}
\newtheorem{definition}[theorem]{Definition}
\DeclareMathOperator{\diam}{diam}
\DeclareMathOperator{\cl}{cl}
\DeclareMathOperator{\inter}{int}
\DeclareMathOperator{\Perm}{Perm}
\begin{document}

\title{Dirichlet sets and Erd\"os-Kunen-Mauldin theorem}
\author{Peter Elia\v s}
\address{Mathematical Institute, Slovak Academy of Sciences,
  Gre\v s\'akova 6, 040\,01 Ko\v sice, Slovakia}
\email{elias@upjs.sk}
\thanks{This work was supported by grant No.~1/3002/26 of VEGA Grant Agency.}
\keywords{Permitted sets, Dirichlet sets, Kronecker's theorem, Analytic subgroups, Perfectly meager sets}
\subjclass[2000]{28A05; 54H05; 54H11}

\begin{abstract}
  By a theorem proved by Erd\"os, Kunen and Mauldin,
  for any nonempty perfect set $P$ on the real line there exists
  a perfect set $M$ of Lebesgue measure zero such that $P+M=\mathbb{R}$.
  We prove a stronger version of this theorem
  in which the obtained perfect set $M$ is a Dirichlet set.
  Using this result we show that for a wide range of familes of subsets of the reals,
  all additive sets are perfectly meager in transitive sense.
  We also prove that every proper analytic subgroup $G$ of the reals
  is contained in an $F_\sigma$-set $F$ such that $F+G$ is a meager null set.
\end{abstract}

\maketitle

\section{Introduction}

In 1981, P. Erd\"os, K. Kunen and R. D. Mauldin proved that
if $P$ is a nonempty perfect subset of $\mathbb{R}$ then there exists a perfect set $M$
of Lebesgue measure zero such that the sum $P+M=\{x+y:x\in P,\ y\in M\}$ is the
whole real line \cite{EKM}.
Their proof was based on a variation of a number-theoretic theorem of G. G. Lorentz.
We now present a different proof, based on the Kronecker's theorem.
We also obtain a somewhat stronger result, finding a perfect set $M$ that is a Dirichlet set.
Later, we will use this result to prove theorems on small sets of reals
related to harmonic analysis.

The notion of Dirichlet sets was introduced by J.-P. Kahane in the late 1960's
(\cite{Kahane1}, see also \cite{Kahane2}, \cite{Lindahl-Poulsen}).
By the original definition, a set $E\subseteq\mathbb{R}$ is a Dirichlet set
if there exists a sequence $\{\lambda_n\}_{n\in\mathbb{N}}$ of real numbers such that
$\lim_{n\to\infty}\left|\lambda_n\right|=\infty$ and the sequence of functions
$\left\{e^{i\lambda_nx}\right\}_{n\in\mathbb{N}}$ converges uniformly to $1$ on $E$.
For our purpose, it is much  easier to use the following equivalent definition, cf.~\cite{KahaneS},
\cite{BuKhRe}.
We will denote by $\left\|x\right\|$ the distance of a real $x$ to the nearest integer,
i.e., $\left\|x\right\|=\min\{\left|x-k\right|:k\in\mathbb{Z}\}$.

\begin{definition}
  A set $E\subseteq\mathbb{R}$ is called a {\em Dirichlet set}\/ if there exists
  an increasing sequence of natural numbers $\{n_k\}_{k\in\mathbb{N}}$ such that
  for all $x\in E$ and for all $k\in\mathbb{N}$, $\left\|n_kx\right\|\le 2^{-k}$.
\end{definition}

Dirichlet sets were introduced to help the study of other types of so-called
`thin sets of harmonic analysis' (see \cite{BuKhRe}), e.g., N-sets, known also as `sets of absolute convergence',
or Arbault sets (called also A-sets) introduced by J. Arbault \cite{Arbault} under the name `sets admitting
a sequence with zero limit'.
One can define these types of sets as follows.

\begin{definition}
  A set $E\subseteq\mathbb{R}$ is an {\em Arbault set}\/ (an {\em A-set}\/)
  if there exists an  increasing sequence of natural numbers $\{n_k\}_{k\in\mathbb{N}}$
  such that $\lim_{k\to\infty}\left\|n_kx\right\|=0$ for all $x\in E$.
  It is an {\em N-set}\/ if there exists a sequence
  $\{a_n\}_{n\in\mathbb{N}}$ of non-negative
  reals such that $\sum_{n\in\mathbb{N}}a_n=\infty$ and for all $x\in E$,
  the sum $\sum_{n\in\mathbb{N}}a_n\left\|nx\right\|<\infty$.
\end{definition}

Let $\mathcal{D}$, $\mathcal{A}$, and $\mathcal{N}$ denote the families of all
Dirichlet sets, A-sets and N-sets, respectively.
Every Dirichlet set is both A-set and N-set.
It is well known (see \cite{Arbault}, \cite{BuKhRe}) that the families $\mathcal{A}$ and $\mathcal{N}$
are both included in the intersection of the ideals of meager and Lebesgue null sets.
If $E$ is an A-set (an N-set) then any subset of the subgroup $\langle E\rangle$ of $(\mathbb{R},+)$
generated by $E$, is again an A-set (an N-set, respectively).
On the other side, $\mathcal{A}$ and $\mathcal{N}$ are not closed under unions.
Also, no inclusion between the families $\mathcal{A}$, $\mathcal{N}$ holds true.

Let us note that Dirichlet sets, N-sets and A-sets can equally be considered as subsets of the
circle group $\mathbb{T}=\mathbb{R}/\mathbb{Z}$.

J. Arbault and independently P. Erd\"os (see \cite[p. 271]{Arbault}) proved that a union of
N-set and a countable set is again an N-set.
An analogous result for Arbault sets was proved by N. N. Kholshchevnikova.

This inspired the following definition \cite{Arbault}.

\begin{definition}[Arbault]
 A set $A\subseteq\mathbb{R}$ is called {\em permitted\/} if $A\cup E$ is an N-set
 for any N-set $E$.
\end{definition}

Thus, Arbault-Erd\"os theorem says that every countable set is permitted.
In \cite{Arbault}, J. Arbault also claimed that there exists a perfect permitted set.
However, N. K. Bari \cite{Bari} later found a gap in his proof and the question of the existence of
perfect permitted sets remained open.

Let us note that since the family $\mathcal{N}$ is closed under generating of the subgroups,
the union $A\cup E$ in the definition of permitted sets may be replaced by the sum $A+E$.

In \cite{Lafontaine}, J. Lafontaine tried to show that there is no perfect permitted set.
He claimed that for any perfect set $P$ there exists an N-set $X$ such that the sum
$X+P$ has a positive measure.
However, Lafontaine's proof seems to contain a gap too; he proves that the set $X+P$
is positive with respect to some Borel measure $\mu$ which is a convolution of two other measures,
but there is no evidence why $\mu$ should be the standard Lebesgue measure.
Without this, $X+P$ may still be an N-set (which must be Lebesgue null), and
$P$ may still be a permitted set.

In 1990s, several results were published in which the existence of uncountable permitted sets was proved
under various set-theoretic assumptions (see \cite{BuKhRe}).

\section{A strengthening of Erd\"os-Kunen-Mauldin theorem}

Lafontaine's claim and the theorem of Erd\"os, Kunen and Mauldin are closely related.
Both state that for any perfect set $P$ there is a small set $X$ such that the set
$X+P$ is large.
When considering Dirichlet sets as `small' sets and the whole real line as the only `large'
set, we obtain the following strengthening of both statements.

\begin{theorem} \label{thm:EKM-Dirichlet}
  For any perfect set $P\subseteq\mathbb{R}$ there exists a Dirichlet set
  $D\subseteq\mathbb{R}$ such that $P+D=\mathbb{R}$.
\end{theorem}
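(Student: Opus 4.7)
The plan is to reduce to a compact version of the problem and then build $D$ by a diagonal induction that alternates between simultaneous Diophantine approximation (for maintaining the Dirichlet condition) and Kronecker's theorem (for ensuring that $P+D$ exhausts $\mathbb{R}$).

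First, by passing to a compact perfect subset and then translating and rescaling I may assume $P\subseteq[0,1]$ is compact. Adding integer translates preserves the Dirichlet property (since $\|n_k(d+m)\|=\|n_k d\|$ for $m\in\mathbb{Z}$), and taking topological closure does too (by continuity of $x\mapsto\|n_k x\|$). So it suffices to build a bounded set $D_0\subseteq\mathbb{R}$ and an increasing sequence $(n_k)$ of positive integers with $\|n_k d\|\le 2^{-k}$ for every $d\in D_0$ and $k$, such that $P+D_0$ is dense in $[0,1]$; then $\overline{D_0}$ is a compact Dirichlet set, $P+\overline{D_0}$ is closed and equals $[0,1]$, and $D:=\overline{D_0}+\mathbb{Z}$ is a Dirichlet set with $P+D=\mathbb{R}$.

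To construct $D_0$, enumerate a countable dense $\{q_k\}_{k\ge 1}\subseteq[0,1]$ and build inductively $n_1<n_2<\cdots$ together with points $p_k\in P$, setting $d_k:=q_k-p_k$, so as to preserve the invariant $\|n_j d_i\|\le 2^{-j}$ for all $i,j\le k$. The induction step has two halves: (i) choose $p_k\in P$ with $\|n_j(q_k-p_k)\|\le 2^{-j}$ for every $j<k$, equivalently $p_k\in P\cap(q_k+F_{k-1})$ where $F_{k-1}:=\{x:\|n_j x\|\le 2^{-j},\ j<k\}$; (ii) choose $n_k>n_{k-1}$ with $\|n_k d_i\|\le 2^{-k}$ for every $i\le k$. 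Half (ii) is immediate from the classical simultaneous Diophantine approximation theorem.

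Half (i) is the crux of the argument and is where Kronecker's theorem is brought in. Since $P$ is perfect and hence uncountable, a routine recursion yields a sequence $(\alpha_m)_{m\ge 1}\subseteq P$ with $\{1,\alpha_1,\alpha_2,\dots\}$ linearly independent over $\mathbb{Q}$; Kronecker's theorem then asserts that $\{(n\alpha_1,\dots,n\alpha_M)\bmod 1:n\in\mathbb{N}\}$ is dense in $\mathbb{T}^M$ for every $M$. My plan is to strengthen the induction so that each $n_j$ is chosen not only to maintain the Dirichlet condition on the existing $d_i$'s but also to drive the finite Kronecker sample $\{(n_1\alpha_m,\dots,n_j\alpha_m)\bmod 1:m\le M_j\}$ to approximate, within a tolerance shrinking to $0$, the one-parameter subgroup of $\mathbb{T}^j$ containing both it and the image of $q_{j+1}$; then $p_{j+1}$ can be taken as one of the $\alpha_m$. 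The principal obstacle is precisely the simultaneous satisfaction of the Dirichlet constraint and this Kronecker density condition on $n_{j+1}$ --- both are given by approximation theorems providing infinite reservoirs of admissible integers, and the technical heart of the proof is a diagonal argument showing that the two reservoirs always meet at every stage.
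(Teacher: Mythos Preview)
Your reduction and the division into steps (i) and (ii) are sound (aside: ``rescaling'' is not innocuous, since $x\mapsto cx$ need not preserve the Dirichlet property---but you do not actually need to rescale, only to pass to a compact perfect subset of diameter $\le 1$ and translate). Step~(ii) is indeed routine. The genuine gap is in step~(i), and your sketch does not close it.

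Here is the difficulty. After $n_1,\dots,n_{j-1}$ are fixed, the residue vectors $(n_1\alpha_m,\dots,n_{j-1}\alpha_m)\in\mathbb{T}^{j-1}$ of your fixed sequence $(\alpha_m)$ are completely determined; at stage $j$ you control only the \emph{last} coordinate $n_j\alpha_m$ via Kronecker. To guarantee that for every future target $q_k$ some $\alpha_m$ lands in the shrinking box $\{\,x:\|n_l(x-q_k)\|\le 2^{-l},\ l\le j\,\}$, you would need, at each stage, exponentially many sample points already well distributed across the $F_{j-1}$-cells---but you have no way to place new $\alpha_m$'s into prescribed cells, since their first $j-1$ residues were frozen long ago. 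The phrase ``the two reservoirs always meet'' conflates two different problems: yes, one application of Kronecker to the independent system $\{1,d_1,\dots,d_j,\alpha_1,\dots,\alpha_{M_j}\}$ handles both sets of constraints on $n_j$ simultaneously, but that still only controls one new coordinate and cannot retroactively organise the sample across the earlier ones. A bare diagonalisation over the $q_k$'s does not help, because the constraint $\|n_2(q_k-p_k)\|\le 2^{-2}$ is already nontrivial and must be anticipated for \emph{all} $k$ at stage~$2$.

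The paper's proof supplies exactly the missing mechanism: instead of a fixed list $(\alpha_m)$, it builds a \emph{tree} of points in $P$. At stage $k$ one has a finite set $A_k\subseteq P$; for each $a\in A_k$ and each dyadic target $b\in B_k=\{m2^{-(k+1)}:0\le m<2^{k+1}\}$ one uses perfectness of $P$ to pick a new point $p_k(a,b)\in P$ within $\varepsilon_k/2$ of $a$, choosing the whole collection $\{p_k(a,b)\}$ to be $\mathbb{Q}$-linearly independent. A single Kronecker application then yields $n_{k+1}$ with $\|n_{k+1}p_k(a,b)-b\|<2^{-(k+2)}$ for every pair. The resulting set $A_{k+1}$ contains, near each old node, one child for every possible residue class. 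Now for an arbitrary $y$ one simply follows the branch $b_k\approx n_{k+1}y$; the branch converges (by the $\varepsilon_k$-control) to a point $p\in P$ with $\|n_k(p-y)\|\le 2^{-k}$ for all $k$. The key idea you are missing is this branching refinement inside $P$: new points are chosen \emph{close to} old ones at every stage, so their earlier residues are inherited, while Kronecker prescribes the new residue---exactly the retroactive control your fixed-sequence plan lacks.
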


This theorem can be formulated and proved the same way also for the group of $\mathbb{T}$
in place of $\mathbb{R}$.

In the proof of Theorem~\ref{thm:EKM-Dirichlet}, we will use the following well-known result of approximation theory
\cite{Cassels, HR}.

\begin{theorem}[Kronecker's Theorem]
 Let real numbers $x_1,\dots,x_n$ be linearly independent over\/ $\mathbb{Q}$, and let
 $y_1,\dots,y_n$ be arbitrary.
 Then for every $\varepsilon>0$ there exists
 $k\in\mathbb{Z}$ such that for every $i$,
 $\left\|kx_i-y_i\right\|<\varepsilon$.
\end{theorem}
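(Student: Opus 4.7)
The plan is to recast Kronecker's theorem as a density assertion on the $n$-torus and prove it via Pontryagin duality. Set $\mathbb{T}^n := \mathbb{R}^n/\mathbb{Z}^n$, let $\alpha \in \mathbb{T}^n$ be the class of $(x_1,\dots,x_n)$, and let $\beta \in \mathbb{T}^n$ be the class of $(y_1,\dots,y_n)$. Since $\|kx_i - y_i\|$ equals the $\mathbb{T}$-distance from the class of $kx_i$ to the class of $y_i$, the conclusion $\|kx_i - y_i\| < \varepsilon$ for all $i$ is precisely the statement that $k\alpha$ lies within sup-distance $\varepsilon$ of $\beta$ on $\mathbb{T}^n$. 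The theorem therefore reduces to showing that the cyclic subgroup $\Gamma := \{k\alpha : k \in \mathbb{Z}\}$ is dense in $\mathbb{T}^n$.

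I would establish this density by contradiction, using that $H := \overline{\Gamma}$ is a closed subgroup of the compact abelian group $\mathbb{T}^n$. If $H \neq \mathbb{T}^n$, then by Pontryagin duality some nontrivial continuous character $\chi : \mathbb{T}^n \to S^1$ is identically $1$ on $H$. Every such character has the explicit form $\chi(a_1,\dots,a_n) = \exp\bigl(2\pi i \sum_{j=1}^n m_j a_j\bigr)$ for some nonzero $m = (m_1,\dots,m_n) \in \mathbb{Z}^n$, so $\chi(\alpha) = 1$ forces $\sum_j m_j x_j \in \mathbb{Z}$. This contradicts the $\mathbb{Q}$-linear independence hypothesis (understood in its standard strong form, namely that $1, x_1, \dots, x_n$ are $\mathbb{Q}$-linearly independent, which is indispensable for the conclusion since otherwise one of the $x_i$ could be rational and $y_i$ could be chosen to defeat the approximation).

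The principal obstacle is the duality step: one needs to know that a proper closed subgroup of $\mathbb{T}^n$ is annihilated by some nontrivial character. If one prefers to avoid invoking Pontryagin duality as a black box, the equivalent route is Weyl's equidistribution criterion, which reduces density of $\Gamma$ to verifying $\frac{1}{N}\sum_{k=1}^N \chi_m(k\alpha) \to 0$ for each nonzero $m \in \mathbb{Z}^n$; the corresponding geometric sum is $O(1/N)$ precisely when $\sum_j m_j x_j \notin \mathbb{Z}$, which is the linear independence hypothesis. A third and more elementary avenue is a Minkowski/pigeonhole argument on $\mathbb{T}^n$: first find integers $k$ making all $\|kx_i\|$ simultaneously small, then use translates of this approximate kernel to hit any prescribed $\beta$ to within $\varepsilon$.
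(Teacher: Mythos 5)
The paper offers no proof of this statement: Kronecker's theorem is quoted as a known result with references to Cassels and to Hewitt--Ross, so there is nothing in-paper to compare your argument against. On its own merits, your duality proof is correct and is essentially the classical one: the conclusion is equivalent to density of the cyclic subgroup $\Gamma=\{k\alpha:k\in\mathbb{Z}\}$ in $\mathbb{T}^n$, where $\alpha$ is the class of $(x_1,\dots,x_n)$; if the closure $H$ of $\Gamma$ were proper, the annihilator of $H$ in $\widehat{\mathbb{T}^n}\cong\mathbb{Z}^n$ would contain a nonzero $m$ (because $\widehat{\mathbb{T}^n/H}\cong H^\perp$ and characters of the nontrivial compact group $\mathbb{T}^n/H$ separate points), forcing $\sum_j m_jx_j\in\mathbb{Z}$. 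Your parenthetical remark about the hypothesis is an important correction rather than a quibble: as literally stated (only $x_1,\dots,x_n$ linearly independent over $\mathbb{Q}$), the theorem is false --- take $n=1$, $x_1=\tfrac12$, $y_1=\tfrac14$, $\varepsilon=\tfrac18$ --- and indeed the relation $\sum_j m_jx_j\in\mathbb{Z}$ with $m\neq 0$ contradicts only the independence of $1,x_1,\dots,x_n$, which is the form in which Cassels and Hewitt--Ross state the theorem. The stronger hypothesis is harmless where the theorem is applied: in the proof of Lemma~\ref{lem:EKM-Dirichlet} the points $p_k(a,b)$ are drawn from an uncountable perfect set, so they may be chosen linearly independent over $\mathbb{Q}$ together with $1$. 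One caveat: of your three suggested routes, only the duality one (and the equivalent Weyl criterion) is actually carried far enough to constitute a proof; the pigeonhole sketch at the end elides the genuinely hard step of passing from $\|kx_i\|$ small to hitting an arbitrary target $\beta$, which is where the independence of $1,x_1,\dots,x_n$ must again be used.
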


Theorem \ref{thm:EKM-Dirichlet} follows from the following lemma,
which will also be useful later.
Quantifier $\forall^\infty$ means `for all but finitely many'.

\begin{lemma} \label{lem:EKM-Dirichlet}
 Let $P\subseteq\mathbb{R}$ be a perfect set.
 Then there exists an increasing sequence of natural numbers $\{n_k\}_{k\in\mathbb{N}}$
 such that
 \begin{enumerate}
  \item[(a)] for every $y\in\mathbb{R}$ there exists $p\in P$ such that\/
   $\forall k\,\left\|n_k(p-y)\right\|\le 2^{-k}$,
  \item[(b)] for every $y\in\mathbb{R}$, the set $P\cap(y-D)$ is dense in $P$, where
   $D=\{x\in\mathbb{R}:\forall^\infty k\,\left\|n_kx\right\|\le 2^{-k}\}$.
 \end{enumerate}
\end{lemma}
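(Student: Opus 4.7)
The overall strategy is to build a Cantor-type tree of closed intervals inside $P$ and use Kronecker's theorem at each stage to choose $n_k$ so that a growing family of $\mathbb{Q}$-linearly independent representatives in $P$ becomes evenly spread on $\mathbb{T}$ under multiplication by $n_k$. To obtain the density in~(b) I will fuse countably many such trees along a base of relatively open subsets of $P$.

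In detail, I construct by induction on $k$ a finitely branching tree of closed intervals $J_s$ (with $s$ ranging over a subtree of $\mathbb{N}^{<\omega}$) such that sibling intervals are pairwise disjoint, each $J_s\cap P$ is a non-empty perfect subset of $P$, together with a sequence $n_1<n_2<\cdots$ and a representative $p_s\in J_s\cap P$ for every node. The representatives are chosen so that the whole collection $\{p_s\}$ is linearly independent over $\mathbb{Q}$; this is possible because $P$ is perfect, so in any non-degenerate relatively open subset of $P$ one can avoid any prescribed countable set of forbidden values. Diameters shrink so that $n_k\,\diam(J_s)\le 2^{-k-1}$ once $|s|=k$, and I take the branching factor at level $k$ large enough that a $2^{-k-2}$-net in $\mathbb{T}$ fits among the children of any single level-$(k-1)$ node.

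At stage $k$ the Kronecker application is: having picked the level-$k$ representatives, choose $n_k>n_{k-1}$ so that simultaneously for every level-$(k-1)$ node $r$, the set $\{n_k p_s\bmod 1:s\text{ a child of }r\}$ lies within $2^{-k-3}$ of a preassigned $2^{-k-2}$-net in $\mathbb{T}$. From this one deduces~(a) step by step: for any $y\in\mathbb{R}$, build a branch $\omega(y)$ level by level by choosing at each stage a child $s$ of $\omega(y)|(k-1)$ for which $\|n_k(p_s-y)\|\le 2^{-k-1}$, which exists by Kronecker density. The diameter condition propagates this bound to all descendants, and the single point $p_y\in\bigcap_k J_{\omega(y)|k}$ lies in $P$ and satisfies $\|n_k(p_y-y)\|\le 2^{-k}$ for every $k$.

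For~(b) I replace the single Cantor tree by a countable family $\{T_j\}_{j\in\mathbb{N}}$, where $T_j$ is rooted inside the $j$-th member of a fixed countable base $\{V_j\}$ of non-empty relatively open subsets of $P$ and is activated starting at stage $j$. A single Kronecker application per stage, using the union of all currently active level-$k$ representatives across the trees (still $\mathbb{Q}$-linearly independent by the choice above), yields one $n_k$ achieving the net property inside every active tree. Running the~(a)-argument inside $T_j$ gives, for every $y\in\mathbb{R}$, a point $p_y^j\in V_j\cap P$ with $\|n_k(p_y^j-y)\|\le 2^{-k}$ for all $k\ge j$, so $p_y^j\in y-D$. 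Since $\{V_j\}$ is a base, $P\cap(y-D)$ meets every non-empty relatively open subset of $P$, which is~(b). The main technical obstacle is the fusion bookkeeping: running infinitely many Cantor schemes sharing one sequence $\{n_k\}$ while preserving rational independence of all representatives and making each stage's Kronecker net bound propagate through the diameter decay. The ingredients (Kronecker's theorem and a Cantor scheme in $P$) are used cleanly, but the parallel schemes and the arithmetic of the constants demand careful care.
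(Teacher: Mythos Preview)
Your approach is correct and essentially the same as the paper's: build a tree of points in $P$ that are $\mathbb{Q}$-linearly independent, apply Kronecker's theorem at each stage so that the children of every node hit a prescribed $2^{-k}$-net in $\mathbb{T}$ under multiplication by $n_k$, then trace a branch adapted to the given $y$. The only organizational difference is in~(b): rather than running countably many parallel trees $T_j$ rooted in a base $\{V_j\}$, the paper fixes a single countable dense set $\{q_k\}\subseteq P$ and simply adjoins $q_0,\dots,q_k$ to the level-$(k{+}1)$ node set at each stage; one then starts the tracing argument at the first $q_{k_0}$ lying in the given open set, which avoids the fusion bookkeeping you describe. One small point to tidy in your write-up: the diameter condition $n_k\,\diam(J_s)\le 2^{-k-1}$ for $|s|=k$ must be imposed \emph{after} $n_k$ is chosen via Kronecker (pick the representatives first, then $n_k$, then shrink the intervals around them), otherwise the order of construction is circular.
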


\begin{proof}
 Fix a countable set $Q=\{q_k:k\in\mathbb{N}\}$ dense in $P$.
 For every $k\in\mathbb{N}$, put
 $B_k=\left\{\frac{m}{2^{k+1}}:m\in\mathbb{Z},\ 0\le m<2^{k+1}\right\}$.
 By induction, we define $n_k$, $\varepsilon_k$ and $A_k$ such that
 for every $k$, $n_{k+1}>n_k$, $\varepsilon_{k+1}\le{\varepsilon_k}/{2}$,
 $A_k$ is a finite subset of $P$ containing $q_k$,
 and for every $a\in A_k$ and $b\in B_k$ there is $a'\in A_{k+1}$
 such that
 \begin{enumerate}
  \item[1.] $\left|a'-a\right|<\displaystyle\frac{\varepsilon_k}{2}$, and
  \item[2.] for all $x$, if $\left|x-a'\right|<\varepsilon_{k+1}$ then
   $\left\|n_{k+1}x-b\right\|<2^{-(k+2)}$.
 \end{enumerate}

 This can be done as follows.
 Let $A_0=\{q_0\}$ and $\varepsilon_0>0$ be arbitrary.
 For every $a\in A_k$ and $b\in B_k$, pick $p_k(a,b)\in P$ so that
 $\left|p_k(a,b)-a\right|<{\varepsilon_k}/{2}$, and the set
 $\{p_k(a,b):a\in A_k,\ b\in B_k\}$ is linearly independent over $\mathbb{Q}$.
 Using Kronecker's theorem find a natural number $n_{k+1}>n_k$ such that
 for all $a\in A_k$ and $b\in B_k$,
 $\left\|n_{k+1}\,p_k(a,b)-b_k\right\|<2^{-(k+2)}$.
 There exists a positive real $\varepsilon_{k+1}\le{\varepsilon_k}/{2}$ such that
 for all $a\in A_k$, $b\in B_k$, and for all $x$,
 if $\left|x-p_k(a,b)\right|\le\varepsilon_{k+1}$ then
 $\left\|n_{k+1}x-b_k\right\|<2^{-(k+2)}$.
 Put $A_{k+1}=\{q_0,\dots,q_k\}\cup\{p_k(a,b):a\in A_k,b\in B_k\}$.

 Let us prove part (b) first.
 Let $D=\{x\in\mathbb{R}:\forall^\infty k\,\left\|n_kx\right\|\le 2^{-k}\}$,
 and let $y$ be a given real.
 We are going to show that $P\cap(y-D)$ is dense in $P$.
 Let $U$ be an open set such that $P\cap U\neq\emptyset$.
 For $k\in\mathbb{N}$, take $b_k\in B_k$ such that
 $\left\|n_{k+1}y-b_k\right\|\le 2^{-(k+2)}$.
 There exists $k_0$ such that the ball $B_{q_{k_0}}(\varepsilon_{k_0})$ is a subset of $U$.
 We put $p_{k_0}=q_{k_0}$ and by induction choose $p_{k+1}\in A_{k+1}$, for $k\ge k_0$,
 such that $\left|p_{k+1}-p_k\right|<\frac{\varepsilon_k}{2}$, and for all $x$,
 if $\left|x-p_{k+1}\right|<\varepsilon_{k+1}$ then $\left\|n_{k+1}x-b_k\right\|<2^{-(k+2)}$.
 Clearly $p_k\to p$ for some $p\in P$, and for all $k\ge k_0$,
 $\left|p-p_k\right|<\varepsilon_k$.
 Thus $p\in U$ and if $k\ge k_0$ then
 \begin{displaymath}
  \left\|n_{k+1}(y-p)\right\|\le
  \left\|n_{k+1}p-b_{k+1}\right\|+\left\|n_{k+1}y-b_{k+1}\right\|\le
  2^{-(k+1)},
 \end{displaymath}
 hence $y-p\in D$, i.e., $p\in y-D$.

 Part (a) can be proved the same way starting with $U=\mathbb{R}$ and $k_0=0$.
\end{proof}

We can now prove Theorem~\ref{thm:EKM-Dirichlet}.

\begin{proof}[Proof of Theorem \ref{thm:EKM-Dirichlet}]
 For a given perfect set $P$, let $\{n_k\}_{k\in\mathbb{N}}$ be the sequence found
 in Lemma \ref{lem:EKM-Dirichlet}, and let $D$ be Dirichlet set
 $\left\{x:\forall k\,\left\|n_kx\right\|<2^{-k}\right\}$.
 By (a), for every $y\in\mathbb{R}$ there exists $p\in P$ such that $p-y\in D$.
 Since $D$ is symmetric, we also have $y-p\in D$, and hence $y=p+(y-p)\in P+D$.
\end{proof}

\begin{corollary} \label{cor:no-perfect}
 There is no perfect permitted set.
\end{corollary}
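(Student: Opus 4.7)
My plan is to derive a direct contradiction from Theorem~\ref{thm:EKM-Dirichlet} together with the two basic facts about N-sets recalled in the introduction: every Dirichlet set is an N-set, and every N-set is Lebesgue null (in particular, $\mathbb{R}$ itself is not an N-set).

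Suppose toward a contradiction that $P\subseteq\mathbb{R}$ is a perfect permitted set. First I would apply Theorem~\ref{thm:EKM-Dirichlet} to obtain a Dirichlet set $D$ with $P+D=\mathbb{R}$. Since $\mathcal{D}\subseteq\mathcal{N}$, the set $D$ is an N-set. Now I would invoke the remark made just after the definition of permitted sets: because $\mathcal{N}$ is closed under the operation of generating subgroups, the union $A\cup E$ in the definition of a permitted set may be replaced by the sum $A+E$. Hence $P+D$ must be an N-set. But $P+D=\mathbb{R}$, and $\mathbb{R}$ is not an N-set (every N-set being both meager and of Lebesgue measure zero). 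This contradiction finishes the proof.

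The only step that requires any care is the replacement of $P\cup D$ by $P+D$; once that is justified by the subgroup-closure of $\mathcal{N}$, everything else is immediate from the theorem just proved. There is no genuine obstacle, since this corollary is precisely the conclusion that Lafontaine was aiming for (but could not establish), and the strengthening of Erd\H{o}s--Kunen--Mauldin to Dirichlet sets is exactly what is needed to make the argument go through.
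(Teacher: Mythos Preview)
Your proof is correct and follows essentially the same route as the paper: apply Theorem~\ref{thm:EKM-Dirichlet} to get a Dirichlet set $D$ with $P+D=\mathbb{R}$, note that $D\in\mathcal{N}$, and then use the subgroup-closure of $\mathcal{N}$ to conclude that $P+D$ would have to be an N-set, contradicting $P+D=\mathbb{R}$. The only cosmetic difference is that the paper writes the subgroup step out explicitly as $\langle P\cup D\rangle\supseteq P+D=\mathbb{R}$, whereas you invoke the earlier remark that $\cup$ may be replaced by $+$.
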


\begin{proof}
 Let $P$ be a perfect set.
 By Theorem \ref{thm:EKM-Dirichlet}, there exists a Dirichlet set $D$ such that $P+D=\mathbb{R}$.
 If $P$ would be permitted then $P\cup D$, and also the subgroup $\langle P\cup D\rangle$
 generated by $P\cup D$,
 would be N-sets.
 However, we have $\langle P\cup D\rangle\supseteq P+D=\mathbb{R}$, a contradiction.
\end{proof}

\section{Permitted sets for various families of sets}

The result from Corollary~\ref{cor:no-perfect} can be formulated in a more general way.
In order to do it, we extend the notion of permitted sets from $\mathcal{F}$ to other families of sets.

\begin{definition}
    We call a family of sets $\mathcal{F}$ {\em hereditary\/} if a subset of any member
    of $\mathcal{F}$ is a member of $\mathcal{F}$ too.
    Let $\mathcal{F}$ be a hereditary family of subsets of a group.
    We say that a set $A$ is {\em permitted for $\mathcal{F}$} if for any $B\in\mathcal{F}$
    also $A+B\in\mathcal{F}$.
    We denote by $\Perm(\mathcal{F})$ the family of all sets permitted for $\mathcal{F}$.
\end{definition}

Let us note that $\Perm(\mathcal{F})$ is a hereditary family closed under finite unions, i.e., an ideal.
Sometimes, permitted sets are called {\em additive}, e.g., in the case when $\mathcal{F}$ is the family
of all meager or null subsets of the real line.

We are now going to prove a stronger version of Corollary \ref{cor:no-perfect}.
To formulate it, we will need the following notions.

\begin{definition}
     A subset $A$ of a topological space $X$ is called {\em perfectly meager}\/
     if for any perfect set $P\subseteq X$, the set $A\cap P$ is meager in the relative topology of $P$.
\end{definition}

The notion of perfectly meager sets has some natural modifications,
see  \cite{Nowik-Weiss}, \cite{Zakrzewski}.

\begin{definition}[Zakrzewski]
    A set $A$ is called {\em universally meager}\/ if for any countable collection $\mathcal{P}$
    of perfect subsets of $X$ there exists an $F_\sigma$-set $F\supseteq A$ such that
    $F\cap P$ is meager in $P$ for every $P\in\mathcal{P}$.
\end{definition}

\begin{definition}[Nowik, Weiss]
    Let $X$ be a topological group.
    A set $A\subseteq X$ is called {\em perfectly meager in transitive sense}\/
    if for any perfect set $P\subseteq X$ there exists an $F_\sigma$-set $F\supseteq A$
    such that for every $y\in X$, $(F+y)\cap P$ is meager in $P$.
\end{definition}

In topological groups $\mathbb{R}$ and $\mathbb{T}$, every set which is perfectly meager in transitive sense
is also unversally meager and every unviversally meager set is perfectly meager.
The existence of sets contradicting the opposite implications is known to be
consistent with ZFC \cite{Nowik-Weiss}, \cite{Zakrzewski}.

We will need one more family of trigonometric thin sets.

\begin{definition}
 A set $E\subseteq\mathbb{R}$ is called {\em pseudo-Dirichlet set}\/
 if there is an increasing sequence of natural numbers $\{n_k\}_{k\in\mathbb{N}}$
 such that for all $x\in E$, $\forall^\infty k\,\left\|n_kx\right\|<2^{-k}$.
 Let $p\mathcal{D}$ denote the family of all pseudo-Dirichlet sets.
\end{definition}

Let us note that $\mathcal{D}\subseteq p\mathcal{D}\subseteq\mathcal{A}\cap\mathcal{N}$,
where both inclusions are proper (see, e.g., \cite{BuKhRe}, \cite{KahaneS}).

\begin{theorem} \label{thm:PMTS}
 Let $\mathcal{F}$ be a hereditary family of subsets of\/ $\mathbb{R}$ such that
 $p\mathcal{D}\subseteq\mathcal{F}$ and\/ $\mathbb{R}\notin\mathcal{F}$.
 Assume that for every set $E\in\mathcal{F}$ there exists an $F_\sigma$-set $F$
 such that $E\subseteq F$ and $E+F\neq\mathbb{R}$.
 Then every set permitted for the family $\mathcal{F}$ is perfectly meager in transitive sense.
\end{theorem}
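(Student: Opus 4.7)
The plan is to pair the given perfect set $P$ with the pseudo-Dirichlet set $D$ produced by Lemma~\ref{lem:EKM-Dirichlet}, and then let the hypothesis on $\mathcal{F}$ supply the $F_\sigma$-hull witnessing that $A$ is perfectly meager in transitive sense.

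In detail: let $A\in\Perm(\mathcal{F})$ and let $P$ be a perfect subset of $\mathbb{R}$. Since $\Perm(\mathcal{F})$ is an ideal, I may replace $A$ by $A\cup\{0\}$ and thus assume $0\in A$. Apply Lemma~\ref{lem:EKM-Dirichlet} to $P$ to obtain a sequence $\{n_k\}_{k\in\mathbb{N}}$ and the associated set $D=\{x\in\mathbb{R}:\forall^\infty k\,\left\|n_kx\right\|\le 2^{-k}\}$, which is pseudo-Dirichlet and therefore belongs to $\mathcal{F}$. Since $A$ is permitted for $\mathcal{F}$, the sum $A+D$ lies in $\mathcal{F}$, so the assumption on $\mathcal{F}$ gives an $F_\sigma$-set $F\supseteq A+D$ with $F+(A+D)\neq\mathbb{R}$. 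Noting that $0\in D$ implies $A\subseteq A+D\subseteq F$, I would take this very $F$ as the $F_\sigma$-hull of $A$ demanded by the definition of \emph{perfectly meager in transitive sense}.

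What remains is to verify that $(F+y)\cap P$ is meager in $P$ for every $y\in\mathbb{R}$. Suppose not; since $F+y$ is $F_\sigma$, writing it as a countable union of closed sets and applying the Baire category theorem inside the Polish space $P$ yields a nonempty open set $U\subseteq\mathbb{R}$ with $P\cap U\subseteq F+y$. For an arbitrary $z\in\mathbb{R}$, part (b) of Lemma~\ref{lem:EKM-Dirichlet} gives that $P\cap(z-D)$ is dense in $P$, so meets $P\cap U$ at some $p$. Then $p\in F+y$ and $z-p\in D$, so $z\in F+y+D$. Since $z$ was arbitrary, $F+y+D=\mathbb{R}$, and translating by $-y$ gives $F+D=\mathbb{R}$. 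Because $0\in A$ forces $D\subseteq A+D$, this upgrades to $F+(A+D)=\mathbb{R}$, contradicting the choice of $F$.

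The only subtlety is a small piece of bookkeeping: the hypothesis gives $F+(A+D)\neq\mathbb{R}$, whereas the Baire/Kronecker ingredient from Lemma~\ref{lem:EKM-Dirichlet} directly produces only $F+D=\mathbb{R}$. Adjoining $0$ to $A$ at the outset (legitimate because $\Perm(\mathcal{F})$ is an ideal and perfect meagerness in transitive sense is hereditary) closes the gap by ensuring $D\subseteq A+D$. Everything else is a straightforward combination of the lemma, the permission property, and Baire category in the complete metric space $P$.
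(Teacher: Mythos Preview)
Your proof is correct and follows essentially the same route as the paper: apply Lemma~\ref{lem:EKM-Dirichlet}(b) to $P$, use permittedness to put $A+D$ in $\mathcal{F}$, invoke the hypothesis to get the $F_\sigma$-hull $F\supseteq A+D$, and then use the density of $P\cap(z-D)$ in $P$ to conclude that each translate of $F$ meets $P$ in a meager set. The paper argues this last step directly (the complement $P\setminus(y+F)\supseteq P\cap(x+y-D)$ is dense $G_\delta$ in $P$) rather than via Baire and contradiction, and your explicit adjoining of $0$ to $A$ is in fact tidier than the paper, which passes silently from $(A+D)+F\neq\mathbb{R}$ to $D+F\neq\mathbb{R}$.
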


\begin{proof}
 Let $A\in\Perm(\mathcal{F})$, and let $P$ be a perfect set.
 By Lemma \ref{lem:EKM-Dirichlet} (b), there exists a pseudo-Dirichlet set $D$ such that
 for all $y\in\mathbb{R}$, $P\cap(y-D)$ is dense in $P$.
 We have $D+A\in\mathcal{F}$, and hence there exists an $F_\sigma$-set $F\supseteq D+A$
 such that $D+F\neq\mathbb{R}$.
 It follows that $F\cap(x-D)=\emptyset$ for some $x\in\mathbb{R}$,
 and hence $P\setminus(y+F)\supseteq P\cap(x+y-D)$ is dense in $P$ for every $y\in\mathbb{R}$.
 Since $F$ is $F_\sigma$, $P\cap(y+F)$ is meager in $P$ for every $y\in P$.
\end{proof}

\begin{corollary} \label{cor:PMTS-trigfam}
 Every set permitted for some of the familes $p\mathcal{D}$, $\mathcal{N}$, $\mathcal{A}$
 is perfectly meager in transitive sense.
\end{corollary}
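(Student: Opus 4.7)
The plan is to apply Theorem~\ref{thm:PMTS} to each of the three families $\mathcal{F} \in \{p\mathcal{D}, \mathcal{N}, \mathcal{A}\}$. Three of the four hypotheses are essentially by inspection: each family is hereditary in its witnessing sequence/series, each contains $p\mathcal{D}$ (using $p\mathcal{D} \subseteq \mathcal{A} \cap \mathcal{N}$ as recorded in the excerpt), and none contains $\mathbb{R}$ since each is contained in the ideal of meager null sets. The substantive task is to exhibit, for every $E \in \mathcal{F}$, an $F_\sigma$-set $F \supseteq E$ with $E + F \neq \mathbb{R}$.

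For $\mathcal{F} = \mathcal{N}$, I would take the sublevel-set hull of the natural functional: given a witness $\{a_n\}$ for $E$, set $g(x) = \sum_n a_n \|n x\|$ and $F = \bigcup_M \{x : g(x) \leq M\}$. Lower semicontinuity of $g$ (an increasing limit of continuous finite sums with nonnegative terms) makes each sublevel set closed, so $F$ is $F_\sigma$, and $E \subseteq F$ because $g$ is finite on $E$. Subadditivity of $\|\cdot\|$ gives $F + F \subseteq F$, and $F$ is itself an N-set for the same $\{a_n\}$, hence meager and $\neq \mathbb{R}$.

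For $\mathcal{F} = p\mathcal{D}$, I would run an analogous construction starting from the witnessing sequence $\{n_k\}$: let $F = \bigcup_m \{x : \|n_k x\| \leq 2^{-k} \text{ for all } k \geq m\}$. This $F_\sigma$-set contains $E$ by definition, and the triangle inequality places $F + F$ inside the analogous union with threshold $2^{-k+1}$, which is an A-set and therefore a proper subset of $\mathbb{R}$.

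The subtle case is $\mathcal{F} = \mathcal{A}$, and I expect this to be the main obstacle: the natural object $\{x : \|n_k x\| \to 0\}$ is in general only $F_{\sigma\delta}$, so no $F_\sigma$-hull inside $\mathcal{A}$ itself is available. I would abandon the hope of keeping $F$ in the family and instead fix a constant $c < 1/2$, for definiteness $c = 1/3$, and set $F = \bigcup_m \{x : \|n_k x\| \leq c \text{ for all } k \geq m\}$. Then $F$ is $F_\sigma$ and still contains $E$ because $\|n_k e\| \to 0$ on $E$. For $e \in E$ and $f \in F$, combining the eventual bounds $\|n_k e\| \leq 1/10$ and $\|n_k f\| \leq c$ places $e + f$ in the corresponding union with threshold $c' = c + 1/10 < 1/2$. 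The crucial observation is that for any such $c'$, each closed level set $\{x : \forall k \geq m, \|n_k x\| \leq c'\}$ is nowhere dense: on any open interval $I$, choosing $k \geq m$ with $n_k > 1/|I|$ leaves a gap of length $(1-2c')/n_k$ where $\|n_k x\| > c'$. Thus $E + F$ sits inside a meager $F_\sigma$-set, and in particular is not all of $\mathbb{R}$, completing the verification.
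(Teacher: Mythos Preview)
Your proof is correct and follows essentially the same route as the paper: reduce to Theorem~\ref{thm:PMTS} and, for each family, build the $F_\sigma$ hull $F$ from the witnessing sequence via sublevel sets of the natural functional, with the $\mathcal{A}$ case handled by thresholding $\|n_kx\|$ at a constant below $\tfrac{1}{2}$. The only noteworthy variation is in that last step: the paper takes $c=\tfrac{1}{8}$, bounds $F+F$ inside $H=\{x:\forall^\infty k\,\|n_kx\|\le\tfrac{1}{4}\}$, and observes that $H\cap[0,1]$ has Lebesgue measure at most $\tfrac{1}{2}$, whereas you use a Baire-category argument showing each level set is nowhere dense---both arguments are equally short and either suffices.
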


\begin{proof}
 It suffices to show that if $\mathcal{F}$ is any of the families $p\mathcal{D}$, $\mathcal{N}$, $\mathcal{A}$
 then for every set $E\in\mathcal{F}$ there exists an $F_\sigma$-set $F$
 such that $E\subseteq F$ and $E+F\neq\mathbb{R}$.
 This is evident for $p\mathcal{D}$ and $\mathcal{N}$, since every N-set is contained in an $F_\sigma$
 N-set which is a proper subgroup of $\mathbb{R}$.

 In the case of $\mathcal{A}$ we need a different argument.
 For every Arbault set $E$ there exists an increasing sequence $\{n_k\}_{k\in\mathbb{N}}$ such that
 $\lim_{k\to\infty}\left\|n_kx\right\|=0$ for every $x\in E$.
 Put $F=\left\{x\in\mathbb{R}:\forall^\infty k\,\left\|n_kx\right\|\le\frac{1}{8}\right\}$.
 Clearly $F$ is an $F_\sigma$-set, $E\subseteq F$,
 and $F+F\subseteq H=\left\{x\in\mathbb{R}:\forall^\infty k\,\left\|n_kx\right\|\le\frac{1}{4}\right\}$.
 We have $H\neq\mathbb{R}$ since the Lebesgue measure of $H\cap \left[0,1\right]$ is not greater than
 $\frac{1}{2}$.
\end{proof}

In the next part of the paper we will prove that the assumptions of Theorem~\ref{thm:PMTS}
are satisfied for a wide range of families of subsets of $\mathbb{T}$ and $\mathbb{R}$.

\section{Analytic subgroups of the reals}

In this section we will use the standard set-theoretic notation:
$\omega$ denotes the set of all natural numbers,
$\omega^\omega$ and $\omega^{<\omega}$
denote the sets of all infinite and of all finite sequences of natural numbers, respectively.
For $t,s\in\omega^{<\omega}$, $t^\smallfrown s$ denotes the concatenation of $t$ and $s$;
$t\subseteq s$ means that $t$ is a initial segment of $s$.
For $x\in\omega^\omega$ and $n\in\omega$,
$x\upharpoonright n$ is the initial segment of $x$ of the length $n$.

A subset of a Polish space
(i.e., a separable, completely metrizable topological space)
is called {\em analytic\/} if it is a continuous image of a Borel subset
of some other Polish space.
It is well known that a set $A$ is analytic if and only if there exists a {\em Suslin scheme for $A$},
i.e., an indexed family $\{A_t:t\in\omega^{<\omega}\}$ such that every $A_t$ is a closed set,
$A_t\subseteq A_s$ whenever $t\supseteq s$, and
$A=\bigcup_{x\in\omega^\omega}\bigcap_{n\in\omega} A_{x\upharpoonright n}$.
We will say that Suslin scheme $\{A_t:t\in\omega^{<\omega}\}$ has {\em vanishing diameters\/}
if for every $x\in\omega^\omega$, $\lim_{n\to\infty}\diam A_{x\upharpoonright n}=0$.

Let $\mathcal{E}$ denote the $\sigma$-ideal generated by closed sets of Lebsgue measure zero.
Clearly all sets contained in $\mathcal{E}$ are both meager and null.

M. Laczkovich \cite{Laczkovich} proved that every proper analytic subgroup of the reals is a subset of
an $F_\sigma$ null set.
We will show a result which is substantially stronger.

\begin{theorem} \label{thm:analytic-null}
  For any proper analytic subgroup $A$ of\/ $\mathbb{R}$ or\/ $\mathbb{T}$ there exists an $F_\sigma$-set
  $F\supseteq A$ such that $A+F\in\mathcal{E}$.
\end{theorem}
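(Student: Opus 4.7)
The plan is to reduce the theorem to producing a pseudo-Dirichlet $F_\sigma$-set containing $A$: concretely, I aim to construct an increasing sequence $\{n_k\}$ of natural numbers such that $F := \{x : \forall^\infty k\, \left\|n_k x\right\| \leq 2^{-k}\}$ contains $A$. Then $F$ is visibly $F_\sigma$, being the countable union over $m$ of the closed sets $F_m := \{x : \forall k \geq m\, \left\|n_k x\right\| \leq 2^{-k}\}$, and it will serve as the required $F$.

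Granting such an $F$, the conclusion $A+F \in \mathcal{E}$ follows by a direct estimate. For $a \in A \subseteq F$ and $f \in F$, both $\left\|n_k a\right\|$ and $\left\|n_k f\right\|$ are at most $2^{-k}$ for all sufficiently large $k$, so $\left\|n_k(a+f)\right\| \leq 2^{-(k-1)}$ eventually. Hence $A+F \subseteq \bigcup_m G_m$, where $G_m := \{x : \forall k \geq m\, \left\|n_k x\right\| \leq 2^{-(k-1)}\}$ is closed. Since $\{x \in [0,1) : \left\|n_k x\right\| \leq 2^{-(k-1)}\}$ has Lebesgue measure $2^{-(k-2)}$, the intersection $G_m \cap [0,1)$ is null; by periodicity $G_m$ is null in $\mathbb{R}$, and the argument is identical for $\mathbb{T}$.

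To construct $F$ I plan to mimic the inductive procedure in the proof of Lemma \ref{lem:EKM-Dirichlet}, replacing the perfect set $P$ by a Suslin scheme $\{A_t : t \in \omega^{<\omega}\}$ of closed sets with vanishing diameters representing $A$. After fixing an enumeration of $\omega^{<\omega}$, at stage $k$ one maintains a finite subtree $S_k$, a representative $p_t \in A_t$ for each $t \in S_k$, and a tolerance $\varepsilon_k > 0$. Passing to stage $k+1$, one enlarges $S_k$ (arranging that $\bigcup_k S_k$ exhausts the whole tree), picks representatives in the newly added $A_t$'s, and applies Kronecker's theorem (a task which here reduces to simultaneous Dirichlet approximation, since the targets may all be taken to be zero) to find $n_{k+1} > n_k$ with $\left\|n_{k+1} p_t\right\| < 2^{-(k+2)}$ for every active $t$; then one chooses $\varepsilon_{k+1} \leq \varepsilon_k/2$ small enough that this estimate propagates to every $x$ within $\varepsilon_{k+1}$ of any active $p_t$. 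The inclusion $A \subseteq F$ is then verified branchwise: for $x \in A$ there is $x_0 \in \omega^\omega$ with $x \in \bigcap_n A_{x_0 \upharpoonright n}$, and vanishing diameters should force $|x - p_{x_0 \upharpoonright k}| < \varepsilon_k$ for all sufficiently large $k$, whence $\left\|n_k x\right\| \leq 2^{-k}$ eventually.

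The main obstacle is reconciling $\varepsilon_k$ with $\diam A_t$: the former is pinned down by the Kronecker estimate already fixed at stage $k$ and shrinks rapidly, while the latter is inherited from the scheme, so the final convergence $|x - p_{x_0 \upharpoonright k}| < \varepsilon_k$ is not automatic from vanishing diameters alone. To handle this it will likely be necessary to refine the scheme in tandem with the construction, replacing each $A_t$ by a closed subset of diameter less than $\varepsilon_{|t|}$ while preserving the equality $A = \bigcup_{x_0}\bigcap_n A_{x_0 \upharpoonright n}$. This refinement step is precisely where the proper-analytic-subgroup hypothesis (as opposed to merely proper analytic set) ought to be essential, since for a general proper analytic set the enclosure in a pseudo-Dirichlet set is already impossible. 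The case where $A$ is countable is trivial (take $F = A$, so that $A + F = A$ is already in $\mathcal{E}$), and the substantive content lies in the uncountable case, where the Suslin scheme has sufficient branching to support the refinement.
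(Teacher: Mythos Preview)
Your overall target is too strong to be true, so the proposed route cannot succeed.  You aim to enclose an arbitrary proper analytic subgroup $A$ in a pseudo-Dirichlet set $F=\{x:\forall^\infty k\,\|n_kx\|\le 2^{-k}\}$.  But every pseudo-Dirichlet set is an Arbault set (indeed $\|n_kx\|\to 0$ for each $x\in F$), while there exist proper analytic subgroups that are not Arbault sets: take any N-set $E$ that is not an A-set (such sets exist, since $\mathcal{N}\not\subseteq\mathcal{A}$) and let $A=\{x:\sum_n a_n\|nx\|<\infty\}$ be the associated maximal N-set.  This $A$ is an $F_\sigma$ subgroup, proper because N-sets are null, and not an A-set because $E\subseteq A$ and $\mathcal{A}$ is hereditary.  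Hence no sequence $\{n_k\}$ can make your $F$ contain $A$.  Your first two paragraphs (the reduction $A\subseteq F\Rightarrow A+F\in\mathcal{E}$) are correct, but the reduction leads to a false statement.

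The place where your construction actually breaks is exactly the ``refinement'' you flag.  Once $n_{k+1}$ is chosen you are forced to take $\varepsilon_{k+1}$ of order $2^{-(k+2)}/n_{k+1}$, and Dirichlet approximation for the growing finite family $S_{k+1}$ makes $n_{k+1}$ grow at least like $2^{\,c\,|S_{k+1}|}$; meanwhile $S_k$ is finite, so for every $k$ there are branches $x_0$ with $f(x_0,k):=\max\{m:x_0\!\upharpoonright m\in S_k\}$ arbitrarily small, and for those branches $\diam A_{x_0\upharpoonright f(x_0,k)}$ cannot be pushed below $\varepsilon_k$.  The group hypothesis does not rescue this: it gives no control over diameters in a Suslin scheme, and as the counterexample above shows, no amount of refinement can work uniformly.

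The paper's proof is completely different.  It does not try to produce a pseudo-Dirichlet cover; instead it builds (Lemma~\ref{lem:ccp}) a Suslin scheme for $A$ with a \emph{countable covering property}, applies Solecki's dichotomy (Lemma~\ref{lem:Solecki}) to the $\sigma$-ideal generated by closed sets $C$ with $C+A_t$ null for some node $t$, and in the nontrivial branch derives a contradiction via a Banach--Mazur argument (adapted from Laczkovich) showing that $A$ would have to be comeager on an interval.  The resulting $F_\sigma$ set $F$ has $A+F\in\mathcal{E}$ but need not be pseudo-Dirichlet, which is exactly why the argument can cover groups like the maximal N-set above.
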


To prove Theorem \ref{thm:analytic-null}, we will use the following result of Solecki \cite{Solecki}.
By a portion of a set we mean a nonempty relatively open subset.

\begin{lemma} \label{lem:Solecki}
  Let $A$ be an analytic set in a Polish space and let $\mathcal{I}$ be
  any $\sigma$-ideal generated by closed sets.
  Then either $A\in\mathcal{I}$ or there is a $G_\delta$-set $G\subseteq A$ such that
  no portion of $G$ belongs to $\mathcal{I}$.
\end{lemma}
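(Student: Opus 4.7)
The plan is a fusion-style construction over a Suslin scheme for $A$, simultaneously maintaining thickness relative to $\mathcal{I}$ and deciding, level by level, how the branches align with a countable base for $X$. Fix a countable base $\{V_k\}_{k\in\omega}$ of the ambient Polish space $X$, and a Suslin scheme $\{F_t:t\in\omega^{<\omega}\}$ of closed subsets of $X$ with vanishing diameters such that $A=\bigcup_{x\in\omega^\omega}\bigcap_n F_{x\upharpoonright n}$ and $F_{t^\smallfrown n}\subseteq F_t$. For each node $t$ let $A(t):=\bigcup_{x\supseteq t}\bigcap_n F_{x\upharpoonright n}$, so that $A(\emptyset)=A$ and $A(t)=\bigcup_n A(t^\smallfrown n)$. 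Call a pair $(t,U)$, with $U\subseteq X$ open, \emph{thick} if $A(t)\cap U\notin\mathcal{I}$; the hypothesis gives that $(\emptyset,X)$ is thick.

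The core observation is a \emph{branching dichotomy} for thick pairs: since $\mathcal{I}$ is a $\sigma$-ideal, for every thick $(t,U)$ some child $(t^\smallfrown n,U)$ is thick; and by Lindel\"of covering of $U$ by basic open sets, every thick $(t,U)$ also admits a basic sub-open $V'\subseteq U$ with $(t,V')$ thick. Iterating these two moves, I would build by induction on $|s|$ for $s\in\omega^{<\omega}$ labels $(t(s),U(s))$ satisfying (i) $t(s)\subseteq t(s^\smallfrown n)$ and $U(s^\smallfrown n)\subseteq U(s)$; (ii) $\diam(F_{t(s)}\cap\overline{U(s)})\to 0$ along every branch; (iii) $(t(s),U(s))$ is thick; and (iv) for each $k\le|s|$, either $U(s)\subseteq V_k$ or $\overline{U(s)}\cap V_k=\emptyset$. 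At the $n$-th step the new basic set $V_n$ is handled by splitting each live label into (finitely or infinitely many) children, one whose $U$ enters $V_n$ and the others whose $U$ avoids $\overline{V_n}$, each obtained by applying the dichotomy so as to keep thickness.

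Vanishing diameters then force each branch $x\in\omega^\omega$ to determine a unique point $g(x)\in\bigcap_n F_{t(x\upharpoonright n)}\subseteq A$, and the body $G:=\bigcap_n\bigcup_{|s|=n}\overline{U(s)}\cap F_{t(s)}$ is a $G_\delta$ subset of $A$ that coincides with $\{g(x):x\in\omega^\omega\}$. For any basic $V_k$ meeting $G$, invariant (iv) forces a level at which every surviving label $s$ satisfies $U(s)\subseteq V_k$; the corresponding subtree is rooted at a thick label, so $G\cap V_k$ contains the $g$-image of an analytic non-$\mathcal{I}$ set, and hence $G\cap V_k\notin\mathcal{I}$. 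Since the base is countable, no portion of $G$ is in $\mathcal{I}$. I expect the main obstacle to be the bookkeeping of the inductive step: one must interleave the two dichotomy moves, enforce diameter shrinkage, and split off children for $V_n$ without breaking thickness for any of the possibly infinitely many new labels. This is a standard fusion argument, but it requires care to produce a genuinely $\omega$-branching tree so that the resulting $G_\delta$ set lies inside $A$ rather than only in its closure.
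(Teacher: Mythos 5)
First, a point of reference: the paper does not prove this lemma at all --- it is quoted verbatim as a theorem of Solecki \cite{Solecki} and used as a black box. So there is no in-paper proof to compare against, and your proposal has to stand on its own as a proof of Solecki's theorem. It does not: there is a genuine gap at the decisive step.

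The construction maintains thickness of the \emph{analytic} sets $A(t(s))\cap U(s)$ attached to the nodes of the fusion tree, but the set you finally produce is the body $G=\{g(x):x\in\omega^\omega\}$ of the fusion, which retains from each $A(t(s))\cap U(s)$ only those points that survive every later refinement. The sentence ``the corresponding subtree is rooted at a thick label, so $G\cap V_k$ contains the $g$-image of an analytic non-$\mathcal{I}$ set, and hence $G\cap V_k\notin\mathcal{I}$'' is a non sequitur: $g$ is defined on branches of the fusion tree, its image is only the captured part of $A(t(s))\cap U(s)$, and thickness of the full analytic set says nothing about this captured part. Indeed, each move of type (1) keeps a single child $t^\smallfrown n$ and each move of type (2) shrinks $U$ to a single basic set, so at every step almost all of the node's ``mass'' is discarded; nothing prevents the residue from lying in $\mathcal{I}$. (Even if it did contain a continuous image of an $\mathcal{I}$-positive set, that would prove nothing, since $\mathcal{I}$-positivity is not preserved by images.) To force $\mathcal{I}$-positivity of $G$ you would have to keep, at every node, \emph{all} thick children $(t^\smallfrown n,V')$, so that the children cover the parent's analytic set up to a set in $\mathcal{I}$ and the total loss over $\omega$ levels stays in $\mathcal{I}$. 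But those children overlap, and then a point of $G$ no longer determines a branch of the fusion tree, so the argument that $G\subseteq A$ --- which needs each $z\in G$ to single out an $x\in\omega^\omega$ with $z\in\bigcap_nF_{x\upharpoonright n}$ --- collapses. This tension between covering (needed for positivity) and separation (needed for $G\subseteq A$) is exactly the difficulty of Solecki's theorem, and a one-pass fusion does not resolve it; Solecki's proof goes through a transfinite derivative/unfolded-game analysis of the Suslin scheme instead. A secondary, fixable issue: as written, $\bigcup_{|s|=n}\overline{U(s)}\cap F_{t(s)}$ is $F_\sigma$ rather than open, so your $G$ is not visibly $G_\delta$; one must work with the open sets $U(s)$ and interleave closures between levels.
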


We will also need some lemmas.
We will consider only subsets and subgroups of the reals (i.e., $\mathbb{R}$ or $\mathbb{T}$).
However, the results are valid in a more general setting.
In Lemma~\ref{lem:analytic-1} and \ref{lem:ccp} we will assume
a Polish group with invariant metric and with $\sigma$-ideal of null sets defined as follows:
a set $A$ is {\em null}\/ if for every $\varepsilon>0$ there exists a sequence
of open balls $\{B_n\}_{n\in\omega}$ such that $A\subseteq\bigcup_{n\in\omega}B_n$
and $\sum_{n\in\omega}\diam B_n<\varepsilon$.
Thus, a set is null iff its 1-dimensional Hausdorff measure is zero.
For subsets of $\mathbb{R}$ and $\mathbb{T}$, this is equivalent to having standard Lebesgue
measure zero.

\begin{lemma} \label{lem:analytic-1}
  Let $A$ be an analytic null set, and let $\varepsilon>0$.
  Then there exists a Suslin scheme $\{B_t:t\in\omega^{<\omega}\}$ for the set $A$ such that\/
  $\sum_{t\in\omega^{<\omega}\setminus\{\emptyset\}}\diam B_t<\varepsilon$.
\end{lemma}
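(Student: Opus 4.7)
The plan is to construct $\{B_t\}$ by starting from any Suslin scheme $\{A_t\}$ for $A$ (available since $A$ is analytic) and interleaving its levels with cover-ball refinements made possible by nullness. The key device is a pairing bijection $\omega \leftrightarrow \omega \times \omega$, under which each entry $t_i$ of a node $t$ decodes as a pair $(j_i, l_i)$: the $j_i$'s track a branch through the $A$-scheme and the $l_i$'s select a cover ball. Writing $\sigma(t) = (j_1, \ldots, j_{|t|})$ produces a length-preserving tree-morphism $\sigma \colon \omega^{<\omega} \to \omega^{<\omega}$, and the invariant $B_t \subseteq A_{\sigma(t)}$ will be maintained throughout, so that $\bigcap_n B_{x \upharpoonright n} \subseteq \bigcap_n A_{\sigma(x) \upharpoonright n} \subseteq A$ along every branch.

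To execute the construction, enumerate $\omega^{<\omega}$ and assign positive budgets $\alpha_s$ with $\sum_s \alpha_s < \varepsilon/2$, then split each $\alpha_s$ as $\alpha_{s,j} = \alpha_s \cdot 2^{-j-2}$. Set $B_\emptyset$ equal to the ambient space. Given $B_s$ with $B_s \subseteq A_{\sigma(s)}$, for each $j \in \omega$ the set $B_s \cap A \cap A_{\sigma(s)^\smallfrown j}$ is a subset of $A$ and hence null, so it is covered by open balls $\{V^{s,j}_l\}_{l \in \omega}$ with $\sum_l \diam V^{s,j}_l < \alpha_{s,j}$. Define the children
\[
  B_{s^\smallfrown \langle j, l\rangle} \;=\; B_s \cap A_{\sigma(s)^\smallfrown j} \cap \overline{V^{s,j}_l}.
\]
Closedness, the nesting $B_t \subseteq B_s$ for $s \subseteq t$, and the invariant $B_t \subseteq A_{\sigma(t)}$ then follow immediately by induction on $|t|$.

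For the verification that $A = \bigcup_x \bigcap_n B_{x \upharpoonright n}$: the inclusion $\bigcup_x \bigcap_n B_{x \upharpoonright n} \subseteq A$ follows directly from the invariant. Conversely, given $a \in A$, pick a branch $y$ with $a \in \bigcap_m A_{y \upharpoonright m}$, and recursively choose $x_{k+1} = \langle y_{k+1}, l_{k+1}\rangle$ with $l_{k+1}$ any index satisfying $a \in V^{x \upharpoonright k, y_{k+1}}_{l_{k+1}}$; such an $l_{k+1}$ exists because $a$ lies in the set being covered at that step. For the diameter bound, note that each non-empty $t$ is uniquely of the form $s^\smallfrown \langle j, l \rangle$, so
\[
  \sum_{t \neq \emptyset} \diam B_t \;\leq\; \sum_s \sum_{j,l} \diam V^{s,j}_l \;<\; \sum_s \sum_j \alpha_{s,j} \;=\; \sum_s \tfrac{\alpha_s}{2} \;<\; \tfrac{\varepsilon}{4} \;<\; \varepsilon.
\]

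The main obstacle is reconciling two competing requirements: the scheme must represent exactly $A$ (and not leak into $\bar A \setminus A$, as would happen if one only refined by cover balls, since nested closures can collapse to points outside $A$), yet the total diameter must be finite (which rules out taking the original $A$-scheme directly, whose level-wise diameters need not be summable). The pairing trick dissolves the tension by running both refinements in parallel inside a single $\omega^{<\omega}$-indexing, with $\sigma$ anchoring the analytic structure so that branch-intersections stay inside $A$, while the cover coordinates supply per-node diameter control that is summable thanks to the budget allocation.
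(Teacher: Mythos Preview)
Your proof is correct and follows essentially the same approach as the paper: both start from an arbitrary Suslin scheme for $A$, use a pairing bijection $\omega\leftrightarrow\omega\times\omega$ so that each child simultaneously advances one level in the original scheme and selects one set from a diameter-budgeted cover of $A$, and track the original scheme via a length-preserving tree map (your $\sigma$, the paper's $\psi$) to ensure branch-intersections remain in $A$. The only differences are cosmetic bookkeeping---the paper fixes all covers $\{I^t_j\}_{j}$ in advance indexed over $T=\omega^{<\omega}\setminus\{\emptyset\}$, while you allocate budgets $\alpha_{s,j}$ and choose the covering balls $V^{s,j}_l$ inductively at each node.
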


\begin{proof}
  Let $\{A_t:t\in\omega^{<\omega}\}$ be an arbitrary Suslin scheme for $A$.
  Put $T=\omega^{<\omega}\setminus\{\emptyset\}$.
  Since $T$ is countable and $A$ is a null set,
  for any $t\in T$ there is a sequence $\big\{I^t_j\big\}_{j\in\omega}$ of closed sets
  such that $A\subseteq\bigcup_{j\in\omega}I^t_j$,
  and $\sum_{t\in T}\sum_{j\in\omega}\diam I^t_j<\varepsilon$.
  Let us fix a bijection $\varphi:\omega\times\omega\to\omega$.

  We will define $\{B_t:t\in\omega^{<\omega}\}$ and a function
  $\psi:\omega^{<\omega}\to\omega^{<\omega}$ as follows.
  Put $B_\emptyset=A_\emptyset$ and $\psi(\emptyset)=\emptyset$.
  We proceed for all $t\in\omega^{<\omega}$ by induction.
  If $B_t$ and $\psi(t)$ are already defined then we have $B_t\subseteq A_{\psi(t)}$.
  For every $k,j\in\omega$ put
  $B_{t^\smallfrown\varphi(k,j)}=B_t\cap A_{\psi(t)^\smallfrown k}\cap I^{t^\smallfrown k}_j$
  and $\psi(t^\smallfrown\varphi(k,j))=\psi(t)^\smallfrown k$.

  Let $B=\bigcup_{y\in\omega^\omega}\bigcap_{n\in\omega}B_{y\upharpoonright n}$.
  It is easy to see that $\{B_t:t\in\omega^{<\omega}\}$ is a Suslin scheme,
  $\sum_{t\in T}\diam B_t<\varepsilon$, and $B\subseteq A$.

  To see that $A\subseteq B$, assume that $a\in\bigcap_{n\in\omega}A_{x\upharpoonright n}$
  for some $x\in\omega^\omega$.
  Let us define $y\in\omega^\omega$ inductively as follows.
  For $n\in\omega$, take $j_n$ such that $a\in I^{y\upharpoonright n^\smallfrown x(n)}_{j_n}$
  and put $y(n)=\varphi(x(n),j_n)$.
  Then for every $n\in\omega$ we have $\psi(y\upharpoonright n)=x\upharpoonright n$ and
  \begin{math}
    B_{y\upharpoonright n+1}=
    B_{y\upharpoonright n}\cap
    A_{x\upharpoonright n+1}\cap
    I^{y\upharpoonright n^\smallfrown x(n)}_{j_n},
  \end{math}
  hence $a\in\bigcap_{n\in\omega}B_{y\upharpoonright n}$.
\end{proof}

\begin{definition}
  We say that a set $A$ {\em can be covered by countably many copies of $B$}\/
  if  there exists a countable set $C$ such that $A\subseteq B+C$.
  We say that a family $\{A_t:t\in\omega^{<\omega\}}$ has
  {\em countable covering property\/} if for every $t\in\omega^{<\omega}$ there 
  exists a finite set $F_t\subseteq\omega^{<\omega}$ such that for every
  $s\in\omega^{<\omega}\setminus F_t$, the set $A_s$ can be covered by countably
  many copies of $A_t$.
\end{definition}

\begin{lemma} \label{lem:ccp}
  Let $A$ be a meager analytic null set in a Polish group.
  Then there exists a Suslin scheme $\{A_t:t\in\omega^{<\omega}\}$ for $A$ having
  vanishing diameters and countable covering property.
\end{lemma}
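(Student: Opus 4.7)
The plan is to build the Suslin scheme in two stages: first secure vanishing diameters via Lemma~\ref{lem:analytic-1}, then refine to force the countable covering property using meagerness. Applying Lemma~\ref{lem:analytic-1} with $\varepsilon=1$ produces a Suslin scheme $\{B_t:t\in\omega^{<\omega}\}$ for $A$ with $\sum_{t\ne\emptyset}\diam(B_t)<1$; summability forces $\diam(B_{x\upharpoonright n})\to 0$ along every branch $x\in\omega^\omega$. That handles half of the conclusion.

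For the countable covering property, I would exploit meagerness as follows. Write $A\subseteq\bigcup_n H_n$ with $H_n$ closed nowhere dense and increasing, and fix a countable dense set $D$ in the ambient Polish group. The guiding idea is to refine $\{B_t\}$ into a scheme $\{A_t\}$ with a nested ``model'' structure, choosing at each depth $k$ a closed set $M_k$ of diameter at most $2^{-k}$ with $M_{k+1}\subseteq M_k$, and forcing every node at depth $k$ to take the form $A_t=A\cap(M_k+d_t)$ for some $d_t\in D$; the models should be arranged so that $M_k$ decomposes as a countable union of $D$-translates of $M_{k+1}$. With such a scheme the CCP is essentially built in: when $|s|\ge|t|$, the nesting $M_{|s|}\subseteq M_{|t|}$ gives $A_s\subseteq A_t+(d_s-d_t)$, a single $D$-shift; when $|s|<|t|$, the decomposition of $M_{|s|}$ into translates of $M_{|t|}$ exhibits $A_s$ as a countable union of $D$-translates of $A_t$. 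The finite exceptional set $F_t$ then only has to absorb the finite chain of proper initial segments of $t$ for which the model nesting fails to give a direct covering.

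I expect the main obstacle to be maintaining the branch-recovery property $A=\bigcup_{x\in\omega^\omega}\bigcap_{n\in\omega} A_{x\upharpoonright n}$ while imposing the rigid ``$D$-translate of $M_k$'' constraint. At each inductive step one must verify that enough $d\in D$ satisfy $(M_{k+1}+d)\cap A\cap A_t\ne\emptyset$ to still exhaust $A_t\cap A$ through the children at the next level. Here the meager decomposition $A\subseteq\bigcup_n H_n$ is essential: each $H_n$, being closed nowhere dense in a separable Polish group, can be covered by countably many $D$-translates of any fixed small ball, and hence, for suitable $M_{k+1}$, by countably many $D$-translates of $M_{k+1}$. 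Combining this with the summable-diameter control from Lemma~\ref{lem:analytic-1} should yield the required exhaustion of $A$ without breaking the translate structure that the countable covering property depends on.
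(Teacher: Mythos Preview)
Your plan has a structural flaw in the CCP step. You propose $A_t=A\cap(M_{|t|}+d_t)$, but a Suslin scheme requires each $A_t$ to be closed, and $A$ is merely analytic. More importantly, even after passing to closures the covering claim ``$A_s\subseteq A_t+(d_s-d_t)$ whenever $|s|\ge|t|$'' is false: unwinding, $A_t+(d_s-d_t)=\bigl(A+(d_s-d_t)\bigr)\cap(M_{|t|}+d_s)$, and while the $M$--factor does contain $M_{|s|}+d_s$, you would also need $A\subseteq A+(d_s-d_t)$, which fails for a generic analytic meager null set since nothing here is translation-invariant. The same obstruction kills the case $|s|<|t|$: decomposing $M_{|s|}$ into $D$-translates of $M_{|t|}$ only produces pieces of the form $A\cap(M_{|t|}+e)$, and these are \emph{not} translates of $A_t=A\cap(M_{|t|}+d_t)$ unless $A$ happens to be invariant under the shift $e-d_t$. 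Intersecting with $A$ destroys exactly the translate relationship your CCP argument relies on, and you cannot drop the intersection without losing the identity $A=\bigcup_{x}\bigcap_n A_{x\upharpoonright n}$.

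The paper's construction goes in the opposite direction. Rather than forcing all $A_t$'s to be translates of a common model, it starts from the summable-diameter scheme $\{B_t\}$ (taking each $B_t$ nowhere dense, which is where meagerness is used) and \emph{enlarges} each $B_t$ to a closed set $A_t$ by attaching, inside a slightly fattened neighbourhood $H_t\supseteq B_t$, a convergent sequence of translated copies of \emph{all} the sets $B_{t_k}$ for $k$ beyond some index. Thus every sufficiently late $B_{t_k}$ literally sits, after a single translation, inside $A_t$; since each $A_s$ is in turn contained in countably many translates of such late $B_{t_l}$'s, the countable covering property follows with $F_t$ consisting of the finitely many early indices. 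The point is that CCP is obtained by making each $A_t$ \emph{rich enough} to contain copies of the others, not by making the $A_t$'s copies of one another.
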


\begin{proof}
  Let $T=\omega^{<\omega}\setminus\{\emptyset\}$ and let $\varepsilon>0$ be arbitrary.
  Since $A$ is analytic set of Lebesgue measure zero, by Lemma~\ref{lem:analytic-1} we can find
  a Suslin scheme $\{B_t:t\in\omega^{<\omega}\}$ for $A$ such that
  $\sum_{t\in\omega^{<\omega}\setminus\{\emptyset\}}\diam B_t<\varepsilon$.
  Since $A$ is meager, we may assume that $B_t$ is nonempty and nowhere dense, for every
  $t\in T$.

  Let $\{t_n\}_{n\in\omega}$ be an enumeration of the set $T$ such that for any $m,n\in\omega$,
  if $t_m\subseteq t_n$ then $m\le n$.
  There exists a sequence $\{c_n\}_{n\in\omega}$
  such that the sets $B_{t_n}+c_n$ are pairwise disjoint,
  and if $b_n\in B_{t_n}+c_n$ for every $n$ then
  the sequence $\{b_n\}_{n\in\omega}$ converges to $0$.
  Hence, for every $n$, the set $C_n=\bigcup_{k\ge n}(B_{t_k}+c_k)\cup\{0\}$ is closed.

  Fix $m\in\omega$.
  Let $(m)\in\omega^1$ denote the sequence containing only one element, namely $m$.
  Since the set $B_{(m)}$ is nowhere dense, there exists a pairwise disjoint family
  $\left\{B\!\left(d^m_j,\delta^m_j\right):j\in\omega\right\}$
  of closed balls with center $d^m_j$ and radius $\delta^m_j$ such that
  $B_{(m)}\subseteq\cl\{d^m_j:j\in\omega\}$,
  and if $b_j\in B\!\left(d^m_j,\delta^m_j\right)$ for every $j$ then
  any accumulation point of the sequence $\{b_j\}_{j\in\omega}$ belongs to $B_{(m)}$.

  For every $t\in T$ such that $t(0)=m$, let $J_t\subseteq\omega$ be a nonempty set such that
  if $b_j\in B\!\left(d^m_j,\delta^m_j\right)$ for every $j\in J_t$ then
  the set of all limit points of $\{b_j:j\in J_t\}$ is exactly the set $B_t$.
  We may assume that $J_s\subseteq J_t$ for $s\supseteq t$.
  Put
  \begin{displaymath}
    E_t=B_t\cup\bigcup_{j\in J_t}B\!\left(d^m_j,\delta^m_j\right)\cap(C_n+d^m_j),
  \end{displaymath}
  where $n$ is such that $t=t_n$.
  Clearly $E_t$ is closed.

  Let $\left\{H_t:t\in\omega^{<\omega}\right\}$ be a family of closed sets
  such that for all $t$ and $s$,
  $B_t\subseteq\inter H_t$, $\diam H_t<2\diam B_t$, and
  $H_s\subseteq H_t$ whenever $s\supseteq t$.

  We define family $\left\{A_t:t\in\omega^{<\omega}\right\}$ as follows.
  Let $A_\emptyset=B_\emptyset$ (= the whole space), and for all $t\in T$ let
  $A_t=H_t\cap E_t$.
  It is easy to see that $\left\{A_t:t\in\omega^{<\omega}\right\}$
  is a Suslin scheme with vanishing diameters and that
  $\bigcup_{x\in\omega^\omega}\bigcap_{n\in\omega}A_{x\upharpoonright n}=A$.

  To show that $\{A_t:t\in\omega^{<\omega}\}$ has countable covering property, let us take
  $t\in\omega^{<\omega}$.
  If $t=\emptyset$ then $A_t$ clearly covers every $A_s$, $s\in\omega^{<\omega}$.
  If $t\in T$ then let $m=t(0)$ and $t=t_n$.
  There exists $j\in J_t$ such that $d^m_j\in\inter H_t$.
  There also exists $n'\ge n$ such that for every $k\ge n'$,
  $B_{t_k}+c_k+d^m_j$ is a subset of $H_t\cap B\left(d^m_j,\delta^m_j\right)$,
  and hence also of $A_t=H_t\cap E_t$.
  Since $A_{t_k}$ can be covered by a union of countably many translations of sets $B_{t_l}$ for $l\ge k$,
  every set $A_{t_k}$ such that $k\ge n'$ can be covered by countable many copies of $A_t$.
\end{proof}

\begin{proof}[Proof of Theorem~\ref{thm:analytic-null}]
  Let $A$ be a proper analytic subgroup of $\mathbb{R}$ (the same proof will work for $\mathbb{T}$).
  Since $A$ is meager and Lebesgue null, by Lemma~\ref{lem:ccp} there exists a Suslin scheme
  $\{A_t:t\in\omega^{<\omega}\}$ for $A$ having vanishing diameters and posessing the countable covering property.
  Let $\mathcal{F}$ be the family of all closed sets $F\subseteq\mathbb{R}$ such that
  there exists some $t\in\omega^{<\omega}$ for which $F+A_t$ is a null set,
  and let $\mathcal{I}$ denote the $\sigma$-ideal generated by $\mathcal{F}$.
  By Lemma~\ref{lem:Solecki}, either $A\in\mathcal{I}$ or there exists a $G_\delta$-set $G\subseteq A$
  such that no portion of $G$ belongs to $\mathcal{I}$.

  In the first case we have $A\subseteq F$, where $F=\bigcup_{n\in\omega}F_n$ for some $F_n\in\mathcal{F}$,
  hence $F$ is an $F_\sigma$-set and for every $n$ there is $t_n\in\omega^{<\omega}$
  such that $F_n+A_{t_n}\in\mathcal{E}$.
  From the countable covering property it follows that for every $n$ there exists $k_n$
  such that if $t\in\omega^{<\omega}$ has length at least $k_n$ then $F_n+A_t$
  can be covered by countable  many copies of $F_n+A_{t_n}$
  and hence $F_n+A_t\in\mathcal{E}$.
  Thus, $F_n+A\in\mathcal{E}$ for every $n$, and $F+A\in\mathcal{E}$.

  In the rest of the proof we show that the second case is impossible.
  Assume that $G\subseteq A$ is a $G_\delta$-set such that if $G\cap U\neq\emptyset$
  for some $U$ open then $G\cap U\notin\mathcal{I}$.
  Let $F$ be a closure of $G$.
  For every open set $U$, if $F\cap U\neq\emptyset$ then $F\cap U\notin\mathcal{I}$ and hence
  $(F\cap U)+A_t$ has positive measure for every $t\in\omega^{<\omega}$.

  We will use the same trick as does the proof of Lemma~2 in \cite{Laczkovich}.
  Let $I_0$ be an arbitrary non-trivial interval in $\mathbb{R}$.
  We shall play a Banach-Mazur game in $I_0$ with the second player winning if the intersections
  of intervals played is a subset of $G+G+A$.
  We will provide a winning strategy for the second player and this way prove that the set
  $G+G+A$ is comeager in $I_0$.
  Since $G+G+A\subseteq A$ and $A$ is a proper analytic subgroup of $\mathbb{R}$,
  hence a meager set, we obtain a contradiction.

  Let $G=\bigcup_{i\in\omega}G_i$ where $G_i$ are open dense subsets of $F$.
  Fix an arbitrary $h\in\omega^\omega$.
  Assume that the first move of the first player is an interval $I_1\subseteq I_0$.
  Since $A_\emptyset=\mathbb{R}$, $I_0$ is trivially contained in the set $F+A_\emptyset+F+A_\emptyset$.
  Since $G_1$ is dense in $F$, there exist $x_1,y_1\in G_1$ and $a_1,b_1\in A_\emptyset$
  such that $x_1+a_1+y_1+b_1\in\inter I_1$.
  There also exist open intervals $J_1,K_1$ such that $x_1\in J_1$, $y_1\in K_1$,
  $\cl J_1\subseteq G_1$, $\cl K_1\subseteq G_1$, and $J_1+a_1+K_1+b_1\subseteq I_1$.
  Sets $(F\cap J_1)+A_{(0)}$, $(F\cap K_1)+A_{(0)}$ are of positive measure,
  hence its sum $(F\cap J_1)+A_{(0)}+(F\cap K_1)+A_{(0)}$ contains a non-trivial closed interval $I_2$.
  Let $I_2$ be the response of the second player.

  On the $k^{\textrm{th}}$ move, let the first player plays an interval $I_{2k-1}\subseteq I_{2k-2}$
  where $I_{2k-2}$ is an interval contained in
  $(F\cap J_{k-1})+A_{h\upharpoonright{(k-1)}}+(F\cap K_{k-1})+A_{h\upharpoonright{(k-1)}}$.
  Since $G_k$ is dense in $F$, there exist $x_k\in G_k\cap J_{k-1}$, $y_k\in G_k\cap K_{k-1}$,
  and $a_k,b_k\in A_{h\upharpoonright{(k-1)}}$ such that $x_k+a_k+y_k+b_k\in\inter I_{2k-1}$.
  Let $J_k$, $K_k$ be open intervals such that
  $x_k\in J_k$, $y_k\in K_k$, $\cl J_k\subseteq G_k$, $\cl K_k\subseteq G_k$, and
  $J_k+a_k+K_k+b_k\subseteq I_{2k-1}$.
  Sets $(F\cap J_k)+A_{h\upharpoonright k}$, $(F\cap K_k)+A_{h\upharpoonright k}$
  are of positive measure, hence its sum contains a non-trivial closed interval $I_{2k}$.
  Let $I_{2k}$ be the move of the second player.

  Now, let $x\in\bigcap_{k\in\omega}J_k$, $y\in\bigcap_{k\in\omega}K_k$,
  and let $a\in\bigcap_{k\in\omega}A_{h\upharpoonright k}$.
  Then $a\in A$, $a_k\to a$, $b_k\to a$, and $x,y\in G$.
  If $z$ is in the intersection of all intervals $I_k$, $k\in\omega$, then
  $z=x+a+y+a$, hence $z\in G+G+A$.
\end{proof}

\begin{corollary}
  Let $A$ be a proper analytic subgroup of\/ $\mathbb{R}$ or\/ $\mathbb{T}$.
  Then $A$ can be separated from one of its cosets by an $F_\sigma$-set.
\end{corollary}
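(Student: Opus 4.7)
The plan is to obtain the separating $F_\sigma$-set as a direct consequence of Theorem~\ref{thm:analytic-null}. Given a proper analytic subgroup $A$, that theorem supplies an $F_\sigma$-set $F\supseteq A$ with $A+F\in\mathcal{E}$. The key observation is that membership in $\mathcal{E}$ forces $A+F$ to be a proper subset of the ambient group: sets in $\mathcal{E}$ are meager and null, so $A+F\neq\mathbb{R}$ (respectively $A+F\neq\mathbb{T}$).

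Next, I would pick any point $y$ in the complement of $A+F$ and verify that $F$ separates $A$ from the coset $A+y$. The inclusion $A\subseteq F$ is already guaranteed by the theorem. For disjointness, the condition $y\notin A+F$ says that there are no $a\in A$, $f\in F$ with $y=a+f$; equivalently, $y-a\notin F$ for every $a\in A$, so $(y-A)\cap F=\emptyset$. Since $A$ is a subgroup we have $-A=A$, hence $y-A=A+y$, so $(A+y)\cap F=\emptyset$.

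There is no real obstacle here beyond recognizing that the subgroup property lets us convert the statement ``$y$ is missed by $A+F$'' into disjointness with a genuine coset of $A$; the substance of the argument was already done in the proof of Theorem~\ref{thm:analytic-null}. The only small point to make explicit is that $\mathbb{R}$ (or $\mathbb{T}$) is not itself in $\mathcal{E}$, which is immediate since the members of $\mathcal{E}$ are meager.
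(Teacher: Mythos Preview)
Your argument is correct and matches the paper's proof essentially step for step: apply Theorem~\ref{thm:analytic-null} to get $F\supseteq A$ with $A+F$ null (hence proper), choose $y\notin A+F$, and conclude that $F$ separates $A$ from $y+A=y-A$. The only difference is that you spell out the routine verifications (that $\mathcal{E}\not\ni\mathbb{R}$ and that $-A=A$) a bit more explicitly than the paper does.
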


\begin{proof}
  By Theorem~\ref{thm:analytic-null}, there exists an $F_\sigma$-set $F\supseteq A$ such that
  $A+F$ is null.
  Hence, there exists $x\in\mathbb{T}\setminus F+A$, and thus
  $F$ is disjoint with the set $x-A=x+A$.
\end{proof}

We do not know whether such $F_\sigma$-set does exist for any coset of $A$.

\section{Permitted sets for families generated by analytic subgroups}

In this short section we bring a version of Theorem~\ref{thm:PMTS} for families generated
by analytic subgroups of $\mathbb{R}$.
The results are equally valid also for the group $\mathbb{T}$.

From Theorem~\ref{thm:analytic-null} it follows that for every proper analytic subgroup
$A$ of $\mathbb{R}$ there exists an $F_\sigma$-set $F$ such that $A+F\neq\mathbb{R}$.
Hence the assumption of Theorem~\ref{thm:PMTS} is satisfied for every family generated
by proper analytic subgroups.

\begin{corollary}
  Let $\mathcal{F}$ be a hereditary family generated by some collection of
  proper analytic subgroups of\/ $\mathbb{R}$ such that
  $p\mathcal{D}\subseteq\mathcal{F}$.
  Then every set permitted for the family $\mathcal{F}$ is perfectly meager
  in transitive sense.
\end{corollary}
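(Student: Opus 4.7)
The plan is to apply Theorem~\ref{thm:PMTS} directly, once the three structural hypotheses on the family $\mathcal{F}$ have been verified. Under the given assumptions, hereditarity and the inclusion $p\mathcal{D}\subseteq\mathcal{F}$ are immediate, so the remaining tasks are to check that (i) $\mathbb{R}\notin\mathcal{F}$ and (ii) every $E\in\mathcal{F}$ is contained in an $F_\sigma$-set $F$ for which $E+F\neq\mathbb{R}$.

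For (i), I would observe that, since $\mathcal{F}$ is the hereditary closure of a collection of proper analytic subgroups of $\mathbb{R}$, every member of $\mathcal{F}$ is a subset of some proper analytic subgroup. As $\mathbb{R}$ itself is not a subset of any of its proper subgroups, $\mathbb{R}\notin\mathcal{F}$.

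For (ii), fix $E\in\mathcal{F}$ and choose a proper analytic subgroup $A$ from the generating collection with $E\subseteq A$. Theorem~\ref{thm:analytic-null} supplies an $F_\sigma$-set $F\supseteq A$ with $A+F\in\mathcal{E}$. In particular $A+F$ has Lebesgue measure zero and is therefore a proper subset of $\mathbb{R}$, so $E\subseteq A\subseteq F$ and $E+F\subseteq A+F\neq\mathbb{R}$. With (i) and (ii) confirmed, Theorem~\ref{thm:PMTS} yields the conclusion.

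The argument contains no substantial obstacle; it simply assembles Theorems~\ref{thm:PMTS} and~\ref{thm:analytic-null}. The only point where one must be a little careful is in interpreting the phrase ``generated by'': under the reading above (hereditary closure only), every $E\in\mathcal{F}$ lies inside a single generator and the verification is immediate. If one wished to admit closure under countable unions of generators, an element $E\in\mathcal{F}$ would only be contained in a countable union of proper analytic subgroups $A_n$, and one would have to assemble an $F_\sigma$-superset $F$ with $E+F\neq\mathbb{R}$ from the $F_\sigma$-sets $F_n\supseteq A_n$ furnished by Theorem~\ref{thm:analytic-null}; this broader reading, however, does not seem necessary for the stated corollary, whose formulation tracks the parenthetical remark preceding it.
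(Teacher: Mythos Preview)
Your proof is correct and mirrors the paper's own reasoning: the corollary is stated immediately after the observation that Theorem~\ref{thm:analytic-null} supplies, for every proper analytic subgroup $A$, an $F_\sigma$-set $F\supseteq A$ with $A+F\neq\mathbb{R}$, so the hypotheses of Theorem~\ref{thm:PMTS} are met. Your explicit verification of $\mathbb{R}\notin\mathcal{F}$ and your remark on the reading of ``generated by'' are useful additions, but the argument is the same.
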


We show that a family generated by subgroups
contains all pseudo-Dirichlet sets if it contains all Dirichlet sets.

\begin{lemma}
  A set $E$ is a pseudo-Dirichlet set iff it is a subset of a group generated by a Dirichlet set.
\end{lemma}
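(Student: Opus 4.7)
My plan for the $(\Leftarrow)$ direction is straightforward. Suppose $E\subseteq\langle D\rangle$ where $D$ is a Dirichlet set with sequence $\{n_k\}$. Any $y\in E$ admits a representation $y=\sum_{i=1}^m\epsilon_i d_i$ with $d_i\in D$ and $\epsilon_i\in\mathbb{Z}$. Setting $L(y):=\sum_i|\epsilon_i|$, the triangle inequality gives $\|n_k y\|\le L(y)\cdot 2^{-k}$ for all $k$. Passing to the subsequence $m_k:=n_{2k}$, I obtain $\|m_k y\|\le L(y)\cdot 2^{-2k}\le 2^{-k}$ whenever $k\ge\log_2 L(y)$, so $E$ is pseudo-Dirichlet with respect to $\{m_k\}$.

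For $(\Rightarrow)$, let $E$ be pseudo-Dirichlet with sequence $\{n_k\}$. The plan is to exploit the structure $E\subseteq\bigcup_N E_N$, where $E_N:=\{y\in\mathbb{R}:\|n_k y\|\le 2^{-k}\text{ for all }k\ge N\}$. A direct computation shows that each $E_N$ is itself a Dirichlet set with respect to the shifted sequence $\{n_{N-1+k}\}_{k\ge 1}$, since for any $y\in E_N$ and $k\ge 1$ one has $\|n_{N-1+k}y\|\le 2^{-(N-1+k)}\le 2^{-k}$. The family $\{E_N\}$ is increasing, and $\bigcup_N E_N$ is the full pseudo-Dirichlet set determined by $\{n_k\}$.

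The core step is to construct a single Dirichlet set $D$ with $\bigcup_N E_N\subseteq\langle D\rangle$. I would attempt this inductively: for $y\in E_{N+1}\setminus E_N$, produce a representation $y=\sum_{i=1}^{L}\epsilon_i d_i$ with $d_i\in D$ and $\epsilon_i\in\{\pm 1\}$, using Kronecker's theorem in the spirit of the proof of Lemma~\ref{lem:EKM-Dirichlet} to realize the necessary fractional-part pattern. The number of summands $L$ must grow with $N$: for $y\in E_{N+1}$ the norms $\|n_k y\|$ can reach order $\tfrac12$ for $k<N$, whereas each Dirichlet summand contributes only $2^{-k}$, forcing $L$ to be at least of order $2^N$ in the worst case.

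The hardest part, I expect, is arranging compatibility: the set $D$ must simultaneously (i) satisfy a uniform Dirichlet condition $\|n_k d\|\le 2^{-k}$ for all $d\in D$ and $k$, and (ii) be sufficiently rich that the required decompositions exist for every element of every $E_N$. This amounts to a Kronecker-type inductive construction analogous to Lemma~\ref{lem:EKM-Dirichlet}, with the added subtlety that the construction of $D$ must anticipate all finite simultaneous approximation problems arising across the whole family $\{E_N\}$, possibly by passing to a sparser subsequence of $\{n_k\}$ to guarantee enough flexibility.
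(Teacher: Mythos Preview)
Your $(\Leftarrow)$ direction is correct and is essentially the paper's argument, just with an explicit subsequence in place of the paper's re-indexing.

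For $(\Rightarrow)$ there is a genuine gap: what you have written is a plan, not a proof, and the plan is far more elaborate than necessary. You correctly decompose the maximal pseudo-Dirichlet set for $\{n_k\}$ as $\bigcup_N E_N$ and note that each $E_N$ is Dirichlet, but you then launch into an unspecified Kronecker-type inductive construction of a new set $D$, conceding yourself that the ``hardest part'' is unresolved. No such construction is needed. The paper simply takes
\[
D=\{x:\forall k\ \|n_kx\|\le 2^{-k}\},
\]
which is exactly your $E_1$, and asserts that
\[
\langle D\rangle=\{x:\exists m\ \forall k\ \|n_kx\|\le m\cdot 2^{-k}\}.
\]
Once this is granted, the inclusion $E\subseteq\langle D\rangle$ is immediate: if $x\in E$ and $\|n_kx\|\le 2^{-k}$ for $k>k_0$, then for $k\le k_0$ one has the trivial bound $\|n_kx\|\le\tfrac12\le 2^{k_0}\cdot 2^{-k}$, so $\|n_kx\|\le 2^{k_0}\cdot 2^{-k}$ for \emph{all} $k$, giving $x\in\langle D\rangle$ with $m=2^{k_0}$. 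In particular $\bigcup_N E_N\subseteq\langle E_1\rangle$, which is precisely the step you thought required an intricate construction.

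So the missing idea is simply that the canonical Dirichlet set $E_1$ associated with the \emph{same} sequence already generates a group large enough to contain every $E_N$; there is no need to manufacture a new $D$ via Kronecker approximation. (One may note that the paper states the displayed identification of $\langle D\rangle$ as ``clear''; the inclusion $\subseteq$ is indeed immediate by the triangle inequality, and it is this inclusion's converse that does the work in the $(\Rightarrow)$ direction.)
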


\begin{proof}
  If $D$ is a Dirichlet set then
  $D\subseteq\{x:\forall k\ \left\|n_k\,x\right\|\le 2^{-k}\}$
  for some increasing sequence of natural numbers $\{n_k\}_{k\in\mathbb{N}}$.
  Clearly the group generated by $D$ is
  $\langle D\rangle=\{x:\exists m\ \forall k\ \left\|n_k\,x\right\|\le m.2^{-k}\}$.
  Hence if $x\in\langle D\rangle$ then there exists $k_0$ such that
  $\forall k>k_0\ \left\|n_k\,x\right\|\le 2^{-k+1}$,
  and $\langle D\rangle$ is a pseudo-Dirichlet set.
  
  On the other hand, let
  $E\subseteq\{x:\forall^\infty k\ \left\|n_k x\right\|\le 2^{-k}\}$
  be a pseudo-Dirichlet set.
  If $x\in E$ then there exists $k_0$ such that
  $\forall k>k_0\ \left\|n_k\,x\right\|\le 2^{-k}$.
  For all $k$ we have $\left\|n_k\,x\right\|\le 2^{k_0}.2^{-k}$,
  and hence $x\in\langle D\rangle$ where
  $D=\{x:\forall k\ \left\|n_k\,x\right\|\le 2^{-k}\}$ is a Dirichlet set.
\end{proof}

As a corollary we obtain the following.

\begin{theorem}
  Let $\mathcal{F}$ be a hereditary family generated by some collection of proper
  analytic subgroups of\/ $\mathbb{R}$ such that every Dirichlet set is in $\mathcal{F}$.
  Then every set permitted for the family $\mathcal{F}$ is perfectly meager
  in transitive sense.
\end{theorem}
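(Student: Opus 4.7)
The plan is to reduce the statement to the preceding Corollary by upgrading the hypothesis $\mathcal{D}\subseteq\mathcal{F}$ to the stronger inclusion $p\mathcal{D}\subseteq\mathcal{F}$; once this is in hand, the conclusion follows immediately from that Corollary, with no further work required.

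To establish $p\mathcal{D}\subseteq\mathcal{F}$, I would take an arbitrary pseudo-Dirichlet set $E$ and apply the preceding Lemma, which furnishes a Dirichlet set $D$ with $E\subseteq\langle D\rangle$. Since $\mathcal{F}$ is hereditary, it suffices to show that the subgroup $\langle D\rangle$ itself belongs to $\mathcal{F}$. This is precisely where the generating assumption comes in: because $D\in\mathcal{F}$ by hypothesis and $\mathcal{F}$ is generated by proper analytic subgroups, $D$ is a subset of some proper analytic subgroup $G$ from the generating collection; but $G$ is a subgroup containing $D$, so it automatically contains $\langle D\rangle$. Therefore $\langle D\rangle\subseteq G\in\mathcal{F}$, and $\langle D\rangle\in\mathcal{F}$ by hereditariness, whence $E\in\mathcal{F}$.

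I do not foresee a serious obstacle here. The only step that warrants a moment's care is the passage from $D\in\mathcal{F}$ to ``$D\subseteq G$ for a single proper analytic subgroup $G$'', which rests on the natural reading of ``hereditary family generated by a collection $\mathcal{G}$'' as consisting of the subsets of members of $\mathcal{G}$; this is the same convention already used in the preceding Corollary. If a coarser interpretation were intended, one would instead verify $\langle D\rangle\in\mathcal{F}$ through the decomposition $\langle D\rangle=\bigcup_{j\in\mathbb{N}}D_j$, where, for a sequence $\{n_k\}$ witnessing that $D$ is Dirichlet, each $D_j=\{x:\forall k,\ \|n_{k+j}x\|\le 2^{-k}\}$ is itself a Dirichlet set belonging to $\mathcal{F}$, and then invoke closure of $\mathcal{F}$ under countable unions.
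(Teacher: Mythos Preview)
Your proposal is correct and matches the paper's own approach: the paper presents this theorem immediately after the Lemma with the phrase ``As a corollary we obtain the following'' and gives no further argument, so the intended proof is exactly the one you describe---use the Lemma to upgrade $\mathcal{D}\subseteq\mathcal{F}$ to $p\mathcal{D}\subseteq\mathcal{F}$ via the generating subgroups, and then apply the preceding Corollary. Your handling of the step $D\in\mathcal{F}\Rightarrow\langle D\rangle\in\mathcal{F}$ (passing through a single generating subgroup $G\supseteq D$) is precisely the detail the paper leaves implicit.
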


We do not know whether the assumption concerning Dirichlet sets
can be omitted or weakened.

\end{document}